

\documentclass[11pt]{amsart}
\textwidth=6.68in \textheight=8.0in \oddsidemargin=0.0in 
\evensidemargin=0.0in 
\topmargin=0in
\parindent=0in
\usepackage{amsmath,amssymb,amsthm,amscd}
\usepackage{tabularx}
\allowdisplaybreaks
\parskip=6pt


\usepackage{amsmath}
\usepackage{amssymb}
\usepackage{amssymb,amsmath,amsthm,amsfonts,latexsym,euscript}
\usepackage{wasysym}
\usepackage{fancyhdr}
\fancypagestyle{plain}
\fancyhf 
\fancyhead[R]{\thepage}
\usepackage{hyperref}
\newtheorem*{theorem*}{Theorem}
\newtheorem{theorem}{Theorem}
\newtheorem{definition}[theorem]{Definition}
\newtheorem{lemma}[theorem]{Lemma}
\newtheorem{corollary}[theorem]{Corollary}

\newtheorem*{remark}{Remark}

\usepackage{amssymb}
\usepackage[T1]{fontenc}
\usepackage[latin1]{inputenc}
\usepackage[usenames,dvipsnames]{xcolor}
\usepackage{graphicx}
\usepackage{setspace}
\usepackage{txfonts}
\usepackage{longtable}
\usepackage{multirow}
\usepackage{blindtext}
\usepackage{ulem}
\usepackage{bm}
\usepackage[margin=1in]{geometry}

\AtEndDocument{{\footnotesize%
 \textsc{G.L. Price, Department of Mathematics, United States Naval Academy, Annapolis, MD., 21402} \par  
  \textit{E-mail address}, G.L.~Price: \texttt{glp@usna.edu} \par
  \addvspace{\medskipamount}
  \textsc{K. Thompson, Department of Mathematics, United States Naval Academy, Annapolis, MD., 21402} \par
  \textit{E-mail address}, K. ~Thompson: \texttt{kthomps@usna.edu} 
 }}

\title{Palindromic Polynomials Over Finite Fields}
\author[G.L.Price]{Geoffrey L. Price}
\thanks{The first author was supported in part by the United States Naval Research Laboratory, Washington, D.C.}
\author[K.Thompson]{Katherine Thompson}
\begin{document}



\begin{abstract}
For any finite field $\mathbb{F}$ and any positive integer $n$ we count the number of monic polynomials of degree $n$ over $\mathbb{F}$ with nonzero constant coefficient and a self-reciprocal factor of any specified degree.  An application is given for systems of linear equations over $\mathbb{F}$ of index $2$.\\
\\
2010 MSC Classification 11T06, 15A06  Keywords: finite fields, self-reciprocal polynomials, partially reciprocal polynomials, linear systems
\end{abstract}
\maketitle

 \section{Introduction and Statement of Results}
 Fix a prime $p$.  Let $ \mathbb F_q$ be the field of $q=p^k$ elements where $k \in \mathbb N$.  Let $f$ be a monic polynomial over $\mathbb F$ of degree $n\geq 1$ and with non-zero constant coefficient.  The \textbf{reciprocal} of $f$ is the degree $n$ polynomial $f^*$ given by $f^*(x) = x^nf(1/x)$.  
 We say that a polynomial $f$ with non-zero constant coefficient is a \textbf{reciprocal polynomial} (or a self-reciprocal polynomial, or a palindrome polynomial) if $f=f^*$.  
 
Given a nonnegative integer $n$ let $z(n)$ denote the number of monic polynomials of degree $n$ over $\mathbb F_q$ with nonzero constant coefficient that have no reciprocal factors other than the constant function $1$. Our main result is:
\begin{theorem}\label{main}
Fix $q=p^k$ for $p$ prime and $k \in \mathbb N$. Then
\begin{itemize}
\item[(i)] $z(0)=1$;
\item[(ii)] $z(1) = q-2$;
\item[(iii)] for $n \geq 2$, $z(n)= k(q)q^{n-1} + c(q,n)$ where $k(q) = \frac{(q-1)^2}{q+1}$ and $c(q,n)=(-1)^{n+1}2(\frac{q-1}{q+1})$.
\end{itemize}
\end{theorem}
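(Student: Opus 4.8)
The plan is to translate the count into a generating function identity using the multiplicative structure of reciprocation, and then read off the coefficients.

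First I would record the elementary algebraic facts underlying everything: for monic $f,g$ with nonzero constant term one has $(fg)^*=f^*g^*$ and $(f^*)^*=f$, and for a monic irreducible $g\neq x$ the normalized reciprocal $\tilde g:=g^*/g(0)$ is again monic irreducible. Thus $g\mapsto\tilde g$ is an involution on the monic irreducibles distinct from $x$, and I would sort them into three families: (S) the self-reciprocal irreducibles $g=g^*$ (these have constant term $1$; note $x+1$, and otherwise even degree); (X) the exceptional factor $x-1$, fixed by the involution yet satisfying $(x-1)^*=-(x-1)$; and (P) the reciprocal pairs $\{g,\tilde g\}$ with $g\neq\tilde g$. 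In characteristic $2$ family (X) is empty, since $x-1=x+1$ lands in (S); I would note that this case is handled identically, with the obvious simplifications.

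Next I would prove the combinatorial heart: a monic $f$ coprime to $x$ has no self-reciprocal factor other than $1$ if and only if (a) no family-(S) irreducible divides $f$, (b) $(x-1)^2\nmid f$, and (c) for each pair $\{g,\tilde g\}$ at most one of $g,\tilde g$ divides $f$. The forward direction is immediate, since $g$, $(x-1)^2$, and $g\tilde g$ are each self-reciprocal; for the converse I would use unique factorization together with the computation that $g^i\tilde g^j$ is self-reciprocal exactly when $i=j$. With this in hand the key step is a unique factorization statement: every monic $f$ coprime to $x$ is written uniquely as $f=r\,s$ with $r$ self-reciprocal and $s$ of the type counted by $z$. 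Existence is the greedy decomposition, pushing all of family (S), the even part of the $(x-1)$-power, and the balanced part $(g\tilde g)^{\min}$ of each pair into $r$; uniqueness follows from the same $i=j$ criterion applied factor by factor. This factorization yields the multiplicative identity $A(t)=R(t)\,Z(t)$, where $Z(t)=\sum_n z(n)t^n$, the factor $A(t)$ counts all monic polynomials coprime to $x$, and $R(t)$ counts monic self-reciprocal polynomials. Both right-hand generating functions are elementary: deleting the factor for $x$ from the Euler product $\tfrac1{1-qt}$ gives $A(t)=\tfrac{1-t}{1-qt}$, and counting palindromes by their free coefficients gives $q^{\lfloor n/2\rfloor}$ monic self-reciprocal polynomials in each degree $n\geq1$, whence $R(t)=\tfrac{1+t}{1-qt^2}$. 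Therefore
\[
Z(t)=\frac{A(t)}{R(t)}=\frac{(1-t)(1-qt^2)}{(1-qt)(1+t)}.
\]
Finally I would extract coefficients: the partial-fraction expansion $\tfrac1{(1-qt)(1+t)}=\tfrac{q/(q+1)}{1-qt}+\tfrac{1/(q+1)}{1+t}$ gives $[t^n]=\tfrac{q^{n+1}+(-1)^n}{q+1}$, and multiplying by the numerator $1-t-qt^2+qt^3$ produces, for $n\geq2$, exactly $z(n)=\tfrac{(q-1)^2}{q+1}q^{n-1}+(-1)^{n+1}\tfrac{2(q-1)}{q+1}$, while the low-order terms reproduce $z(0)=1$ and $z(1)=q-2$.

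I expect the main obstacle to be the uniqueness half of the $f=r\,s$ factorization, and relatedly the correct bookkeeping of the anti-self-reciprocal behavior of $x-1$: it is a fixed point of the involution that is nonetheless not self-reciprocal, so it must be quarantined from family (S) and contributes only through its even powers. Getting these scalar and sign conditions exactly right, and verifying that they collapse correctly in characteristic $2$, is where the argument is most error-prone; by contrast, once $A(t)=R(t)\,Z(t)$ is in place the remaining computation is routine.
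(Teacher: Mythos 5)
Your proposal is correct, but it takes a genuinely different route from the paper's. The paper proceeds by induction on $n$: it establishes the recursion $t(n)-pr(n)=z(n)+z(n-1)$ (Lemma \ref{helpful}), writes $pr(n)$ as the convolution $\sum_{j\geq 2} s(j)z(n-j)$, and then evaluates the resulting geometric sums by hand in separate odd and even cases, telescoping to the closed form. You instead package the same convolution structure once and for all as the generating-function identity $A(t)=R(t)Z(t)$, i.e.
\[
\frac{1-t}{1-qt}=\frac{1+t}{1-qt^2}\,Z(t),
\]
and extract coefficients by partial fractions; your computations check out, including the values $z(0)=1$, $z(1)=q-2$, and $z(2)=(q-1)(q-2)$. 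Two points of comparison. First, both arguments rest on the same structural fact --- every monic $f$ with $f(0)\neq 0$ factors uniquely as $f=rs$ with $r$ self-reciprocal and $s$ free of self-reciprocal factors --- but the paper uses it silently (it is implicit in writing $pr(n)$ as a convolution, and again in the Corollary $p(n,j)=z(n-j)s(j)$, with justification traceable only to \cite{Price}), whereas you actually prove it via the involution $g\mapsto g^*/g(0)$ on irreducibles. Your trichotomy is accurate: the self-reciprocal irreducibles, the exceptional $x-1$ (a fixed point of the involution with $(x-1)^*=-(x-1)$, so not itself a palindrome in odd characteristic, contributing only through even powers --- one checks that any irreducible $g$ with $g^*=-g$ satisfies $g(1)=0$ and hence equals $x-1$), and the pairs $\{g,\tilde g\}$, for which $g^i\tilde g^j$ is a palindrome exactly when $i=j$; the characteristic-$2$ collapse of $x-1$ into the self-reciprocal family is also handled correctly. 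In this respect your write-up is more complete than the paper's. Second, your route buys uniformity: a single computation covers all $n\geq 2$ and both parities at once (the only point worth a sentence in a final version is $n=2$, where the $qt^3$ term of the numerator formally calls for $[t^{-1}]=0$, yet the generic formula still holds because its would-be contribution $q\bigl(q^{0}+(-1)^{-1}\bigr)/(q+1)$ vanishes identically), and the bijection $f\leftrightarrow(r,s)$ delivers the paper's Corollary as an immediate byproduct. What the paper's induction buys in exchange is elementary self-containedness --- no generating-function formalism --- at the cost of the lengthy case-split telescoping in Section 2.
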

Remark: In Theorem $4.4$ of \cite{Price} (see also \cite{Price2}) the first author proved Theorem \ref{main} for the case $q=2$. This result was used in the classification (up to conjugacy) of the binary shifts on the hyperfinite $II_1$ factor.
%
 
 \section{Proof}
 
\begin{lemma} For $n\in \mathbb N$ let $t(n)$ be the number of monic polynomials of degree $n$ over $\mathbb F_q$ with non-zero constant coefficient.  Then $t(n) = (q-1)q^{n-1}$.
\end{lemma}
\begin{proof} There are $q-1$ choices for the constant term and $q$ choices for each of the degree $d$ terms, for $1\leq d \leq n-1$.
\end{proof}

We will use the notation $pr(n)$ to denote the number of \textbf{partially reciprocal} polynomials of degree $n$, i.e. the monic polynomials of degree $n$ with nonzero constant term which have a reciprocal factor of degree at least $2$.  

\begin{lemma}\label{helpful}  For $n\geq 2$, $t(n) - pr(n) = z(n)+z(n-1)$.
\end{lemma}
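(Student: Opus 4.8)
The plan is to read both sides as sizes of explicit sets and to produce a partition of the left-hand set that matches the two terms on the right. Let $A_n$ denote the set of monic degree-$n$ polynomials over $\mathbb{F}_q$ with nonzero constant term having no self-reciprocal factor of degree $\geq 2$; directly from the definitions of $t(n)$ and $pr(n)$ we have $|A_n| = t(n) - pr(n)$. Let $Z_m$ be the analogous set with ``degree $\geq 2$'' replaced by ``degree $\geq 1$'', so that $z(m) = |Z_m|$ and $Z_n \subseteq A_n$. The goal then becomes $|A_n| = |Z_n| + |Z_{n-1}|$.

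The first step is the elementary observation that, under the convention $f = f^*$, a self-reciprocal monic polynomial must have constant term $1$ and palindromic coefficients, so the only self-reciprocal polynomial of degree $1$ is $x+1$. I would split $A_n$ by divisibility by $x+1$: set $A_n^0 = \{ f \in A_n : (x+1) \nmid f\}$ and $A_n^1 = \{ f \in A_n : (x+1) \mid f\}$. For the first piece, a member of $A_n$ not divisible by $x+1$ has no self-reciprocal factor of degree $\geq 2$ and, as $x+1$ is the unique self-reciprocal factor of degree $1$, none of degree $1$ either; hence $A_n^0 = Z_n$.

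For the second piece I would set up the map $f \mapsto h := f/(x+1)$ and claim it is a bijection $A_n^1 \to Z_{n-1}$ with inverse $h \mapsto (x+1)h$. That $f \in A_n^1$ forces $x+1$ to divide $f$ exactly once is immediate, since $(x+1)^2$ is itself self-reciprocal of degree $2$; and the quotient $h$ lies in $Z_{n-1}$ because any self-reciprocal factor of $h$ of degree $\geq 2$ would divide $f$, while a self-reciprocal factor of degree $1$ would force $(x+1)^2 \mid f$.

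The hard part is the reverse inclusion: for $h \in Z_{n-1}$, showing that $(x+1)h$ has no self-reciprocal factor of degree $\geq 2$, so that it indeed lands in $A_n^1$. For this I would use that the normalized reciprocal $\hat{f} = f(0)^{-1}f^*$ is a multiplicative involution on monic polynomials with nonzero constant term, so each monic irreducible $g$ either satisfies $\hat{g} = g$ or pairs with a distinct monic irreducible $\hat{g}$. Since $h$ has no self-reciprocal factor, it contains no strictly self-reciprocal irreducible and at most one member of each pair $\{g, \hat{g}\}$ with $\hat{g} \neq g$ (otherwise $g\hat{g}$ would be a self-reciprocal factor of degree $\geq 2$). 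Testing a putative self-reciprocal divisor $s$ of $(x+1)h$ against unique factorization forces the paired irreducibles to drop out, leaving only products of $x+1$ with irreducibles $g$ satisfying $\hat{g} = g$ yet $g \neq g^*$. The key subtlety to isolate is that the only such irreducible is $x-1$: any $g$ with $g^* = -g$ satisfies $g(1) = -g(1)$ and so vanishes at $1$ (in odd characteristic; in characteristic $2$ there is no distinction), hence is divisible by $x-1$. Since both $x-1$ and $(x+1)(x-1) = x^2-1$ have constant term $-1$, neither is self-reciprocal in the strict sense, so no self-reciprocal factor of degree $\geq 2$ is created. This completes the bijection and yields $|A_n| = |Z_n| + |Z_{n-1}| = z(n) + z(n-1)$.
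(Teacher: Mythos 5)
Your proof is correct and follows essentially the same route as the paper: split the set counted by $t(n)-pr(n)$ according to divisibility by $x+1$ (the unique monic self-reciprocal polynomial of degree one) and use the bijection $f \mapsto f/(x+1)$ between polynomials whose only palindrome factor is $x+1$ and monic degree-$(n-1)$ polynomials with no palindrome factor. The one substantive difference is that you actually verify the converse direction --- that $(x+1)h$ acquires no self-reciprocal factor of degree $\geq 2$ when $h$ has none --- via the normalized-reciprocal involution $g \mapsto \hat{g}$ on irreducibles and the isolation of $x-1$ as the only irreducible with $\hat{g}=g$ but $g \neq g^*$; the paper asserts this converse in a single sentence without justification, so your filled-in argument (including the observation that $x^2-1$ has constant term $-1$ and the collapse $x-1=x+1$ in characteristic $2$) is a sound and worthwhile addition rather than a deviation.
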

\begin{proof}
Clearly $t(n)-pr(n)$ is the number of monic polynomials with nonzero constant coefficient which have a reciprocal factor of degree at most $1$.  Note that such a polynomial $f$ either has no palindrome factors or has $x+1$ as its only palindrome factor.  If $x+1$ is the only palindrome factor of $f$, then $g(x)=\frac{1}{x+1}f(x)$ is a polynomial of degree $n-1$ with no palindrome factors.  Conversely, if $g$ is a monic polynomial over $\mathbb F_q$ of degree $n-1$ with no palindrome factors, then $f(x)=(x+1)g(x)$ is a polynomial with $x+1$ as its only palindrome factor, so there is a one-to-one correspondence between polynomials of degree $n-1$ with no palindrome factors and polynomials of degree $n$ with just a single linear palindrome factor.  The result follows.
\end{proof}

\begin{lemma}  For each $n\in \mathbb N$ let s(n) be the number of monic reciprocal polynomials. For any $j\in \mathbb N \cup \{0\}$, $s(2j) = q^j = s(2j+1)$.  
\end{lemma}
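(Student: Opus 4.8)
The plan is to count monic
reciprocal polynomials of a given degree directly, by analyzing what the
palindrome (self-reciprocal) condition imposes on the coefficients. Write
$f(x)=\sum_{i=0}^n a_i x^i$ with $a_n=1$ and $a_0\neq 0$. The reciprocal is
$f^*(x)=x^n f(1/x)=\sum_{i=0}^n a_i x^{n-i}$, so the condition $f=f^*$ is
exactly the symmetry relation $a_i=a_{n-i}$ for all $i$. Because $f$ is monic,
matching the leading coefficient forces $a_0=a_n=1$, which incidentally
guarantees the constant term is nonzero automatically. So I would first reduce
the problem to: count the number of coefficient strings $(a_1,\dots,a_{n-1})$
over $\mathbb F_q$ subject to the constraints $a_i=a_{n-i}$.

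Next I would count the free parameters. The symmetry $a_i=a_{n-i}$ pairs up
index $i$ with index $n-i$, and the number of independent coefficients among
$a_1,\dots,a_{n-1}$ is the number of such pairs (counting a self-paired index
once). I would split into the two parity cases matching the statement. For
$n=2j$ the constrained coefficients are $a_1,\dots,a_{2j-1}$; the pairing
$i\leftrightarrow 2j-i$ has a fixed point at $i=j$, and the free choices are
$a_1,\dots,a_j$, giving $q^{j}$ polynomials. For $n=2j+1$ the constrained
coefficients are $a_1,\dots,a_{2j}$; here the pairing $i\leftrightarrow
2j+1-i$ has no fixed point, and the free choices are $a_1,\dots,a_j$, again
giving $q^{j}$ polynomials. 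The boundary case $j=0$ ($n=0$ and $n=1$) should
be checked by hand: $s(0)=1$ (the constant polynomial $1$) and $s(1)=1$ (only
$x+1$ is a monic linear palindrome with the forced constant term), matching
$q^{0}=1$.

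The main thing to be careful about, rather than a deep obstacle, is the
bookkeeping of which index is the pivot and making sure the forced endpoint
condition $a_0=a_n=1$ is separated from the genuinely free interior
coefficients. In particular I would note that the constant term is nonzero for
free once monicity is imposed, so the nonzero-constant-term hypothesis does not
remove any cases here. I would also double-check that in the even case the
central coefficient $a_j$ is indeed free (it is, since $a_j=a_{n-j}=a_j$ is
vacuous) and is correctly included in the count of $j$ free coefficients
$a_1,\dots,a_j$. Once the parameter count is pinned down for both parities, the
conclusion $s(2j)=q^{j}=s(2j+1)$ is immediate.
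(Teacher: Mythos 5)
Your proof is correct and is precisely the direct coefficient-symmetry count that the paper invokes when it states ``Direct, and left to the reader'': the condition $f=f^*$ forces $a_i=a_{n-i}$ with $a_0=a_n=1$, leaving exactly $j$ free coefficients $a_1,\dots,a_j$ in both parity cases. Your added care about the self-paired middle coefficient when $n=2j$, the automatic nonvanishing of the constant term, and the $j=0$ boundary cases supplies exactly the bookkeeping the paper omits.
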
 
\begin{proof}
Direct, and left to the reader.
\end{proof}      

We now proceed with the proof of Theorem \ref{main}. 
\begin{proof}
\begin{itemize}
\item[(i)] The only monic constant is $f(x)=1$, so $z(0)=1$ trivially.
\item[(ii)] Let $f(x) = x+a$ with $a \in \mathbb F_q$. We exclude $a=0,1$, leaving $q-2$ polynomials. So $z(1)=q-2$.
\item[(iii)] We proceed by induction on $n \geq 2$. \\
\\
For $n=2$, we have $pr(2) = z(0)s(2)=1\cdot q = q$, so by Lemma \ref{helpful}, $t(2)-pr(2) = (q-1)q - q = q^2 -2q$.  So then, since $z(1)=q-2$ and $z(1)+z(2)=t(2)-pr(2)$, we have 

\begin{eqnarray*}
z(2)&=&q^2-2q - (q-2) \\
&=& (q-1)(q-2)\\
&= &\frac{(q-1)(q^2-q-2)}{q+1}\\
&= &\frac{(q-1)[q(q-1) - 2]}{q+1}\\
& = & \dfrac{(q-1)^2}{q+1} \cdot q^{2-1} + (-1)^{2+1}2 \left( \dfrac{q-1}{q+1}\right)\\
&=&k(q)q^{2-1} + c(q,2),
\end{eqnarray*}
which completes the base case.\\
\\
For the inductive step, we first suppose $n$ is odd, and write $n=2m+1$ for $m \in \mathbb N$, By assumption $z(2m)=k(q)q^{2m-1}+c(q,2m)$. Then
\[
 pr(n) = pr(2m+1) = s(2m+1)z(0)+s(2m)z(1)+\cdots + s(2)z(2m-1) = I + II, 
 \]
where
\begin{eqnarray*}
I &= &\sum_{\ell=0}^{m-1} z(2\ell)s(2(m-\ell)+1)\\
    &= &z(0)s(2m+1) +  \sum_{\ell=1}^{m-1} z(2\ell)s(2(m-\ell)+1)\\
    &= &1\cdot q^m + \sum_{\ell=1}^{m-1} \left(k(q)q^{2\ell -1} -2\left(\frac{q-1}{q+1} \right) \right)q^{m-\ell}\\
    &= &q^m + \sum_{\ell=1}^{m-1} \left( k(q)q^{m+\ell -1} -2\left( \dfrac{q-1}{q+1}\right)p^{m-\ell} \right)\\
    &= &q^m + k(q) (q^m + \cdots + q^{2m-2}) -2\left( \dfrac{q-1}{q+1}\right)(q^{m-1}+\cdots + q)\\
    &= &q^m + k(q) q^m\left(\frac{q^{m-1}-1}{q-1}\right) -2 \left( \dfrac{q-1}{q+1}\right)q(\frac{q^{m-1}-1}{q-1})\\
    &= &q^m +k(q) \left(\frac{q^{2m-1}-q^m}{q-1}\right)- 2 \left( \dfrac{q-1}{q+1}\right)\left(\frac{q^m-q}{q-1}\right),
\end{eqnarray*}
and
\begin{eqnarray*}
 II &=& s(2m)z(1) + s(2m-2)z(3) + \cdots + s(2)z(2m-1)\\
     &=& s(2m)z(1) + \sum_{\ell =0}^{m-2} z(2\ell + 3)s(2(m-\ell -1))\\
     &=& (q-2)q^{m} + \sum_{\ell =0}^{m-2} \left(k(q)q^{2\ell +2}+2\left( \dfrac{q-1}{q+1}\right)\right)q^{m-\ell -1}\\
     &=& (q-2)q^{m} + k(q)(q^{m+1} + \cdots + q^{2m-1}) + 2 \left( \dfrac{q-1}{q+1}\right)(q^{m-1}+\cdots + q)\\
     &=& (q-2)q^{m} +k(q)q^{m+1}\left(\frac{q^{m-1}-1}{q-1}\right) + 2 \left( \dfrac{q-1}{q+1} \right)q\left(\frac{q^{m-1}-1}{q-1}\right)\\
     &=& (q-2)q^{m}+ k(q) \left(\frac{q^{2m}-q^{m+1}}{q-1}\right) +2 \left( \dfrac{q-1}{q+1}\right)\left(\frac{q^{m}-q}{q-1}\right).
\end{eqnarray*}
Then

\begin{eqnarray*}
pr(2m+1) &=& I + II\\
& = & q^m +k(q) \left(\frac{q^{2m-1}-q^m}{q-1}\right)- 2 \left( \dfrac{q-1}{q+1}\right)\left(\frac{q^m-q}{q-1}\right) + \\
& & (q-2)q^{m}+ k(q) \left(\frac{q^{2m}-q^{m+1}}{q-1}\right) +2 \left( \dfrac{q-1}{q+1}\right)\left(\frac{q^{m}-q}{q-1}\right)\\
 &=& q^m +k(q) \left(\frac{q^{2m-1}-q^m}{q-1}\right) + (q-2)q^{m}+ k(q) \left(\frac{q^{2m}-q^{m+1}}{q-1}\right)\\
 &=& q^m + \left(\frac{q-1}{q+1}\right)(q^{2m-1}-q^m) + (q-2)q^m + \left(\frac{q-1}{q+1}\right)(q^{2m}-q^{m+1})\\
 & = & (q-1)q^m + \left( \dfrac{q-1}{q+1}\right)(q^{2m-1}-q^m+q^{2m}-q^{m+1})\\
 & = & \dfrac{q-1}{q+1}(q^{m+1}+q^m) +  \dfrac{q-1}{q+1}(q^{2m-1}-q^m+q^{2m}-q^{m+1})\\
 & = & (q-1)q^{2m-1}.
\end{eqnarray*}
But now by Lemma \ref{helpful}
\begin{eqnarray*}
z(2m+1) & = & t(2m+1)-pr(2m+1)-z(2m) \\
& = & (q-1)q^{2m}-(q-1)q^{2m-1} - \left( \dfrac{(q-1)^2}{q+1}\cdot q^{2m-1} -2\cdot \dfrac{q-1}{q+1}\right)\\
& = & (q-1)q^{2m-1}\left( q-1-\frac{q-1}{q+1}\right) + 2 \left( \frac{q-1}{q+1} \right)\\
& = & (q-1)^2q^{2m-1} \left( 1- \frac{1}{q+1} \right) + 2 \left( \frac{q-1}{q+1}\right)\\
& = & k(q) q^{2m}+c(q,2m+1).
\end{eqnarray*}


\noindent We now suppose that $n$ is even, and write $n=2m$ where $m \in \mathbb N$. Assume $z(\ell)= k(q)q^{\ell-1}+c(q,n)$ for  
$\ell = 2,\dots,2m-1$.  We have 
\[ pr(2m)= \sum_{k=0}^{2m-2} z(k)s(2m-k) = I + II, \]
where
\begin{eqnarray*}
  I &=& \sum_{\ell=0}^{m-1} z(2\ell)s(2(m-\ell))\\
    &=& z(0)s(2m) +  \sum_{\ell=1}^{m-1} z(2\ell)s(2(m-\ell))\\
    &=& 1\cdot q^m + \sum_{\ell=1}^{m-1} \left( k(q) \cdot q^{2\ell -1} -2 \left( \dfrac{q-1}{q+1}\right) \right)q^{m-\ell}\\
    &=& q^m + \sum_{\ell=1}^{m-1} \left( k(q)q^{m+\ell -1} -2 \left( \dfrac{q-1}{q+1}\right)q^{m-\ell}\right)\\
    &=& q^m +k(q) q^m(1+\cdots + q^{m-2}) -2 \left( \dfrac{q-1}{q+1}\right)q(1+\cdots + q^{m-2})\\
    &=& q^m + k(q) q^m\left( \frac{q^{m-1}-1}{q-1}\right) -2 \left( \dfrac{q-1}{q+1} \right)q\left( \frac{q^{m-1}-1}{q-1}\right)\\
    &=& q^m +k(q) \left( \frac{q^{2m-1}-q^m}{q-1}\right)- 2 \left( \dfrac{q-1}{q+1}\right)\left(\frac{q^m-q}{q-1}\right),
\end{eqnarray*}
and
\begin{eqnarray*}
  II &=& \sum_{\ell = 1}^{m-1} z(2(\ell -1)+1)s(2(m-\ell)+1)\\
     &=& z(1)s(2(m-1)+1) + \sum_{\ell =2}^{m-1}  z(2(\ell -1)+1)s(2(m-\ell)+1)\\
     &=& (q-2)q^{m-1} + \sum_{\ell =2}^{m-1} \left(k(q)q^{2\ell -2}+2 \left( \dfrac{q-1}{q+1}\right)\right)q^{m-\ell}\\
     &=& (q-2)q^{m-1} + k(q)(q^m + \cdots + q^{2m-3}) + 2 \left( \dfrac{q-1}{q+1}\right)(q^{m-2}+\cdots + q)\\
     &=& (q-2)q^{m-1} +k(q)q^m(1+\cdots + q^{m-3})+ 2 \left( \dfrac{q-1}{q+1}\right)q(1 + \cdots + q^{m-3})\\
     &=& (q-2)q^{m-1} +k(q)q^m(\frac{q^{m-2}-1}{q-1}) + 2 \left( \dfrac{q-1}{q+1}\right)q\left(\frac{q^{m-2}-1}{q-1}\right)\\
     &=& (q-2)q^{m-1}+ k(q)(\frac{q^{2m-2}-q^m}{q-1}) +2 \left( \dfrac{q-1}{q+1}\right)\left(\frac{q^{m-1}-q}{q-1}\right).
\end{eqnarray*}
Then
\begin{eqnarray*}
  pr(2m) &=& I + II\\
  & = & q^m +k(q) \left( \frac{q^{2m-1}-q^m}{q-1}\right)- 2 \left( \dfrac{q-1}{q+1}\right)\left(\frac{q^m-q}{q-1}\right) + (q-2)q^{m-1}+ k(q)\left(\frac{q^{2m-2}-q^m}{q-1}\right) +2 \left( \dfrac{q-1}{q+1}\right)\left(\frac{q^{m-1}-q}{q-1}\right)\\
  & = & q^m +(q-2)q^{m-1}+ k(q) \left( \dfrac{q^{2m-1}+q^{2m-2}-2q^m}{q-1} \right) - 2 \left( \dfrac{q-1}{q+1}\right) \left( \dfrac{q^m-q^{m-1}}{q-1}\right)\\
  & = & 2(q^m-q^{m-1})+ \left( \dfrac{q-1}{q+1}\right) (q^{2m-1}+q^{2m-2}-2q^m) - \dfrac{2(q^m-q^{m-1})}{q+1}\\
  & = & \dfrac{2(q^m-q^{m-1})(q+1)}{q+1}- \dfrac{2(q^m-q^{m-1})}{q+1}+ \dfrac{q^{2m}-2q^{m+1}-q^{2m-2}+2q^m}{q+1}\\
  & = & \dfrac{2q^{m+1}-2q^m+q^{2m}-2q^{m+1}-q^{2m-2}+2q^m}{q+1} \\
   & = & q^{2m-2}(q-1).
\end{eqnarray*}
But again by Lemma \ref{helpful} we have
\begin{eqnarray*}
z(2m) & = & t(2m)-pr(2m)-z(2m-1) \\
& = & (q-1)q^{2m-1}-(q-1)q^{2m-2}-\left( k(2m-1)q^{2m-2}+2 \left( \dfrac{q-1}{q+1}\right) \right)\\
& = & (q-1)^2q^{2m-2}-\left( \dfrac{(q-1)^2}{q+1}\right)q^{2m-2}- 2 \left( \dfrac{q-1}{q+1}\right)\\
& = & \left( \dfrac{(q-1)^2}{q+1}\right)q^{2m-2}-2 \left( \dfrac{q-1}{q+1} \right) \\
 & = & k(q)q^{2m-1}+c(q, 2m).
\end{eqnarray*}
\end{itemize}
\end{proof}
\begin{corollary}  Let $j$ and $n$ be positive integers with $1 \leq j \leq n$.  Let $p(n,j)$ be the number of monic polynomials $f$ over $\mathbb{F}_q$ with nonzero constant coefficient such that $f$ has a self-reciprocal factor of degree $j$ and no higher.  Then $p(n,j)=z(n-j)s(j)$.
\end{corollary}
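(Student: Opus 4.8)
The plan is to set up a degree-preserving bijection between the monic polynomials $f$ of degree $n$ with $f(0)\neq 0$ whose self-reciprocal factor of largest degree has degree exactly $j$, and the pairs $(g,h)$ in which $g$ is a monic polynomial of degree $n-j$ with nonzero constant term and no self-reciprocal factor (there are $z(n-j)$ of these) and $h$ is a monic self-reciprocal polynomial of degree $j$ (there are $s(j)$ of these). The map sends $f$ to $(f/h,h)$, where $h$ is the self-reciprocal factor of $f$ of largest degree, and its inverse sends $(g,h)$ to $gh$. Granting that this map is well defined and bijective, the identity $p(n,j)=z(n-j)s(j)$ follows at once; along the way one recovers the decomposition $pr(n)=\sum_{j\ge 2}p(n,j)=\sum_{j\ge 2}z(n-j)s(j)$ already used in the proof of Theorem \ref{main}.

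First I would record the multiplicative properties of the reciprocal. Since $(fg)^*=f^*g^*$, the product of two self-reciprocal polynomials is again self-reciprocal, and if $f_1=f_2h$ with $f_1,f_2$ self-reciprocal, then cancelling in $f_1^*=f_2^*h^*$ shows that $h$ too is self-reciprocal. Thus the self-reciprocal divisors of a fixed $f$ are closed under taking exact quotients, and, as explained next, under $\gcd$ and $\operatorname{lcm}$.

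The crux, and the step I expect to be the main obstacle, is to show that $f$ has a \emph{unique} self-reciprocal factor of maximal degree, so that $h$ is genuinely well defined. For this I would analyze the involution $p\mapsto\tilde p:=p^*/p(0)$ on the monic irreducible factors of $f$. Each monic irreducible $p$ with $p(0)\neq 0$ is either fixed by this involution or paired with a distinct $\tilde p$; applying $*$ twice forces $p(0)=\pm 1$ in the fixed case, and then $p$ is itself self-reciprocal exactly when $p(0)=1$, while $p^*=-p$ when $p(0)=-1$. The key observation is that a nonconstant polynomial with $p^*=-p$ is anti-palindromic and hence satisfies $p(1)=0$ when $q$ is odd, so the only irreducible of this second kind is $x-1$ (and when $q$ is even the two kinds coincide). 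Consequently, writing $f=\prod p^{e_p}$, a divisor $\prod p^{c_p}$ is self-reciprocal precisely when $c_p=c_{\tilde p}$ for each reciprocal pair and $c_{x-1}$ is even; both conditions survive coordinatewise $\max$, so the $\operatorname{lcm}$ of self-reciprocal divisors of $f$ is again a self-reciprocal divisor. Hence the self-reciprocal divisors of $f$ form a lattice with a unique maximal element $h$, whose exponent is $e_p$ on each self-reciprocal irreducible, $\min(e_p,e_{\tilde p})$ on each reciprocal pair, and, when $q$ is odd, the largest even integer $\le e_{x-1}$ on $x-1$.

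Finally I would verify the two directions using this explicit description of $h$. For the forward direction, if $g=f/h$ had a self-reciprocal factor $h'$ of positive degree, then $hh'$ would be a self-reciprocal divisor of $f$ of degree exceeding $\deg h$, contradicting maximality; together with $g(0)\neq 0$, this shows $g$ is one of the $z(n-j)$ polynomials. For the reverse direction, given a self-reciprocal-free $g$ and a self-reciprocal $h$ of degree $j$, the exponent formula above shows that the maximal self-reciprocal factor of $gh$ has exactly the exponents of $h$, so it equals $h$; since $h(0)=1$ gives $(gh)(0)=g(0)\neq 0$ and the degrees add to $n$, the two maps are mutually inverse. Counting the pairs $(g,h)$ then yields $p(n,j)=z(n-j)s(j)$.
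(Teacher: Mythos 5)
Your proof is correct, but there is nothing in the paper to compare it against: the corollary is stated without any proof, and the identity underlying it, $pr(n)=\sum_{k} z(k)s(n-k)$, is likewise invoked without justification at the start of each inductive step in the proof of Theorem \ref{main}. Your argument supplies exactly what the authors take for granted, and the part you correctly identified as the crux---that $f$ has a \emph{unique} maximal self-reciprocal factor $h$---is the genuinely nontrivial point. Your treatment of it is sound: the involution $p\mapsto \tilde p = p^*/p(0)$ on monic irreducibles with $p(0)\neq 0$ is well defined (note $\tilde{\tilde p}=p$ holds for all such $p$, since $\tilde p(0)=1/p(0)$; the constraint $p(0)=\pm 1$ arises only at fixed points, which is where you use it), the identification of $x-1$ as the unique anti-palindromic irreducible in odd characteristic is right (with the two fixed-point types merging in characteristic $2$, as you note, consistent with $x+1$ being the palindrome used in the paper's Lemma \ref{helpful}), and the resulting exponent criterion---$c_p=c_{\tilde p}$ on reciprocal pairs, $c_{x-1}$ even---is indeed preserved under coordinatewise $\max$, giving closure of self-reciprocal divisors under $\operatorname{lcm}$ and hence uniqueness of $h$. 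Equally important, and easy to overlook, is your reverse-direction check that multiplying a palindrome-free $g$ by a self-reciprocal $h$ cannot create a larger palindrome factor: the dangerous case is $g$ containing $p$ while $h$ contains the pair $p\tilde p$, and your conditions $\min(a_p,a_{\tilde p})=0$, $a_{x-1}\le 1$ on $g$ dispose of it, including the parity bookkeeping at $x-1$. So your bijection $f\leftrightarrow (f/h,\,h)$ proves both the corollary and, as you observe, the convolution formula on which the paper's entire induction rests; in that sense your write-up is more complete than the source.
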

\section{Application to Systems of Linear Equations}
As mentioned above, our counting result was established in \cite{Price} for $\mathbb F_2$. Here $z(0)=1, z(1)= z(2)= 0$ and for $n\geq 3$, $z(n) = \frac13(2^{n-1})+\frac23$ (for $n$ odd) and $\frac13(2^{n-1})-\frac23$ (for $n$ even).  We present an application of  these results to a family of linear systems of equations over $\mathbb{F}_2$. This application is related to the classification problem in \cite{Price} mentioned in Section 1 above. 

\begin{definition} Let $(a)=(\dots, a_{-2}, a_{-1}, a_0,a_1,a_2,\dots)$ be a sequence in $\mathbb{F}_2$ with $a_0 = 0$ and with $a_j=a_{-j}$.  We shall say that $(a)$ has index $2$ if one of the following holds:
\begin{enumerate}
\item $a_{-1}=a_1=1$, and $a_j =0$ otherwise. 
\item $a_0=0$ and $a_j=1$ otherwise.
\item For some $m\geq 2$ there is a vector $\vec{k}=[k_0,k_1,\dots,k_{m}]$, with $k_0 = k_{m}=1$ 
satisfying the following infinite linear system of equations:
\begin{alignat*}{4}
\notag 1 =& k_0a_1 + k_1a_0 + k_2a_1+\cdots + k_ma_{m-1} \\
\notag 0 =& k_0a_2 + k_1a_1 + k_2a_0+\cdots + k_ma_{m-2} \\
\notag 0 =& k_0a_3 + k_1a_2 + k_2a_1 + \cdots+ k_ma_{m-3}\\
\notag 0 =& k_0a_4 + k_1a_3 + k_2a_2 + \cdots + k_ma_{m-4}\\
\notag \vdots \\
\end{alignat*}
\end{enumerate}
\end{definition}
Note that $(1)$ may be rewritten (with $\vec{k} =[k_0] = [1]$ and $(a)=(0100\cdots)$ as
\begin{alignat*}{4}
\notag 1 =& k_0a_1\\
\notag 0 =& k_0a_2\\
\notag 0 =& k_0a_3\\
\notag 0 =& k_0a_4\\
\notag \vdots \\
\end{alignat*} 
and $(2)$ may be rewritten (with $\vec{c} =[k_0,k_1]=[1,1]$ and $(a)=(0111\cdots)$ as
\begin{alignat*}{4}
\notag 1 =& k_0a_1+k_1a_0\\
\notag 0 =& k_0a_2+k_1a_1\\
\notag 0 =& k_0a_3+k_1a_2\\
\notag 0 =& k_0a_4+k_1a_3\\
\notag \vdots \\
\end{alignat*}  
One can show that the sequence $a_{m-1},a_{m-2},\dots,a_1,a_0,a_1,a_2,\dots$ in (3) is not periodic, whereas \\$a_{m-2},a_{m-3},\dots,a_1,a_0,a_1,a_2,\dots$ is periodic, see \cite{LN}[Theorem 6.11].  From the proof of Theorem 3.3 of \cite{Price} it follows that a vector $\vec{k}=[k_0,k_1,\dots,k_{m}]$ with $m\geq 2$ satisfies a system of the form $(3)$ for at most one sequence $(a)$.  From the proof of Lemma 3.1 of \cite{Price}, a necessary condition for a vector $\vec{k}$ to satisfy $(3)$ is that the polynomial $k_0 + k_1x + \cdots + k_mx^m$ either has no palindrome factors or the single palindrome factor $1+x$ over $\mathbb{F}_2$.  There are exactly $z(m) + z(m-1)$ such polynomials, with $z(m)$ polynomials of degree $m$ and constant coefficient $1$ having no palindrome factors, and $z(m-1)$ such polynomials of degree $m$ having $1+x$ as its only palindrome factor.  Therefore there are $z(m)+z(m-1) = \frac13(2^{m-1}) + \frac13(2^{m-2}) = 2^{m-2}$ such polynomials.  By the proof of Theorem 3.4 of \cite{Price} there are at least $2^{m-2}$ vectors $\vec{k}=[k_0,k_1,\dots,k_m]$ satisfying (3).  Therefore we have the following result (cf. \cite{Price}[Theorem 5.1]).

\begin{theorem}  For $m\geq 3$ there are exactly $2^{m-2}$ linear systems of index $2$ satisfying $(3)$ over $\mathbb{F}_2$.
\end{theorem}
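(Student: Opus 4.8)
The plan is to establish the count by squeezing: I will show that the number of vectors $\vec{k}=[k_0,k_1,\dots,k_m]$ with $k_0=k_m=1$ that satisfy a system of the form (3) is simultaneously at most $2^{m-2}$ and at least $2^{m-2}$. By the proof of Theorem 3.3 of \cite{Price} each such vector satisfies (3) for at most one sequence $(a)$, so every admissible vector determines a single system of type (3), and counting these systems amounts to counting the admissible vectors.

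For the upper bound I would invoke the necessary condition from the proof of Lemma 3.1 of \cite{Price}: if $\vec{k}$ satisfies (3) then, identifying $\vec{k}$ with the polynomial $k_0+k_1x+\cdots+k_mx^m$ over $\mathbb{F}_2$, this polynomial has either no palindrome factor or the single palindrome factor $1+x$. Since $k_0=k_m=1$, it is monic of degree $m$ with nonzero constant term, so the admissible vectors form a subset of the monic degree-$m$ polynomials of this kind. There are exactly $z(m)$ of them with no palindrome factor, and, via the bijection $f\mapsto f/(1+x)$ already used in Lemma \ref{helpful}, exactly $z(m-1)$ with $1+x$ as their only palindrome factor. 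Hence at most $z(m)+z(m-1)$ vectors can satisfy (3).

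The decisive step, and the point where Theorem \ref{main} feeds into the application, is the evaluation $z(m)+z(m-1)=2^{m-2}$. Among the consecutive integers $m$ and $m-1$, both at least $2$ for $m\geq3$, one is odd and one is even, so in the $q=2$ formulas the constant terms $+\tfrac23$ and $-\tfrac23$ cancel (the even formula also gives the correct value $z(2)=0$), whence
\[
z(m)+z(m-1)=\tfrac13\bigl(2^{m-1}+2^{m-2}\bigr)=\tfrac13\cdot 3\cdot 2^{m-2}=2^{m-2},
\]
the desired upper bound.

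For the matching lower bound I would appeal to the construction in the proof of Theorem 3.4 of \cite{Price}, which exhibits at least $2^{m-2}$ distinct vectors $\vec{k}$ satisfying a system (3). Combining the two bounds forces exactly $2^{m-2}$ admissible vectors, hence exactly $2^{m-2}$ systems. I expect the main obstacle to lie not in the squeeze but in the lower-bound input: one must verify that the constructed vectors are genuinely distinct, that each actually solves a system (3), and that the resulting two-sided sequence is non-periodic so that it lands in case (3) rather than in the degenerate cases (1) or (2). This non-periodicity is precisely where the hypothesis $m\geq 3$ and the criterion of \cite{LN}[Theorem 6.11] are required, and it is also what guarantees that distinct vectors yield distinct systems so that the two counts agree.
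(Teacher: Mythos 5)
Your proposal is correct and follows essentially the same route as the paper: the upper bound via the necessary palindrome-factor condition from the proof of Lemma 3.1 of \cite{Price}, the identification of the count as $z(m)+z(m-1)=\frac13(2^{m-1})+\frac13(2^{m-2})=2^{m-2}$ using Theorem \ref{main} at $q=2$, the lower bound from the proof of Theorem 3.4 of \cite{Price}, and the uniqueness of the sequence $(a)$ for each $\vec{k}$ from the proof of Theorem 3.3 of \cite{Price}. Your explicit check that the even-case formula correctly gives $z(2)=0$ (covering $m=3$) is a careful touch the paper leaves implicit, but the argument is the same.
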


\begin{remark} Note that every degree $2$ polynomial with constant coefficient $1$ is a palindrome so that there are no systems (3) with $m=2$.
\end{remark}

\begin{remark} There is a notion of a linear system of equations over $\mathbb{F}_2$ of index $j$ with $j>2$, but the analysis is considerably more complicated, as there is no longer a one-to-one correspondence between vectors $\vec{k}$ and sequences $(a)$ (see \cite{Price2} for related results).
\end{remark}

\end{document}